\DeclareMathAlphabet{\mathcal}{OMS}{cmsy}{m}{n} 
\newtheorem{theorem}{Theorem}[section]
\newtheorem{lemma}[theorem]{Lemma}
\theoremstyle{definition}
\newtheorem{remark}[theorem]{Remark}
\newcommand{\vertbar}{\>|\>}
\newcommand{\set}[2]{\ensuremath{\{ #1 \vertbar #2 \}}}
\def\liebrack  {\ensuremath{[\,\cdot\, , \cdot\,]}} 
\DeclareMathOperator{\dcobound}{d}
\DeclareMathOperator{\End}{End}
\DeclareMathOperator{\id}{id}
\DeclareMathOperator{\im}{Im}
\DeclareMathOperator{\Ker}{Ker}
\DeclareMathOperator{\Z}{Z}
\begin{document}

\title{Yet another proof of the Ado theorem}
\author{Pasha Zusmanovich}
\address{
Department of Mathematics, University of Ostrava, Ostrava, Czech Republic
}
\email{pasha.zusmanovich@osu.cz}
\date{last revised July 29, 2018}
\thanks{J. Lie Theory \textbf{26} (2016), no.~3, 673--681; arXiv:1507.02233}
\thanks{
The financial support of the Regional Authority of the Moravian-Silesian Region
(grant MSK 44/3316) and of the Ministry of Education and Science of the 
Republic of Kazakhstan (grant 0828/GF4) is gratefully acknowledged}
\subjclass[2010]{17B10; 17B30} 
\keywords{Ado, faithful representation, nilpotent Lie algebra}

\begin{abstract}
We give a simple proof of the Birkhoff theorem about existence of a faithful 
representation for any finite-dimensional nilpotent Lie algebra of 
characteristic zero.
\end{abstract}

\maketitle

\section{Introduction}

The Ado theorem says that any fi\-ni\-te-di\-men\-si\-o\-nal Lie algebra admits a faithful 
fi\-ni\-te-di\-men\-si\-o\-nal representation. It was first proved by Ado in 1935 
\cite{ado}\footnote[2]{
A bit of trivia: Sophus Lie had no doubt that (speaking in modern terms) every 
finite-dimensional Lie algebra admits a faithful finite-dimensional 
representation, but he was unable to prove this (cf. 
\cite[footnote at p.~598]{lie}). Nikolai Grigorievich Chebotarev has put his 
student Igor Dmitrievich Ado to the task. Ado presented his work as a Candidate
(a Russian equivalent of PhD) dissertation, but was awarded a Doctor degree (a 
Russian equivalent of Habilitation) instead, an extremely rare event in Russian
academic officialdom.
}
using a Lie-group technique (and as such, was restricted to the fields of 
complex and real numbers). Since then a few different proofs were given,
including purely algebraic ones and those valid in the positive characteristic
(the latter is due to Iwasawa; cf. \cite[\S 7, Exercice 5]{bourbaki}).
A relatively recent new proof, due to Neretin, is given in \cite{neretin}. All 
the known proofs involve the universal enveloping algebra, an 
infinite-dimensional object, and nontrivial facts about it (notably, the
Poincar\'e--Birkhoff--Witt theorem). Moreover, the arguments are different in 
zero and positive characteristics. Somewhat unusually, the positive 
characteristic case of the theorem is much easier than the characteristic zero 
one, due to the possibility to employ finite-dimensional induced modules 
constructed with the aid of a reduced finite-dimensional version of the 
universal enveloping algebra (in characteristic zero, such modules would be 
infinite). Perhaps because of all this, the Ado theorem is sometimes referred --
in writings, and also in talks and private conversations -- as a 
``strange theorem'' (\cite{neretin}) ``surprisingly tricky to prove'' 
(\cite[\S 2.3]{tao}).

Here we give an entirely different proof of the Ado theorem, basing on 
properties of free nilpotent Lie algebras, and on simple combinatorics related
to the tensor product of representations. The proof is elementary and does not 
involve universal enveloping algebras (in fact, it does not involve associative 
algebras at all and is intrinsic to the category of finite-dimensional Lie 
algebras). Another interesting feature of the proof is that it employs induction
on the dimension of the algebra -- not ``from below'', as in many existing 
proofs of the Ado theorem, but ``from above'', descending from an algebra for 
which the Ado theorem is already established.

The drawbacks of the proof are that it is valid for nilpotent algebras and in 
characteristic zero only (first established in full generality by Birkhoff in 
1937 \cite{birkhoff}, thus sometimes referred as ``the Birkhoff theorem''). 

We present the proof in Section \ref{sec-proof} as a series of (elementary) 
lemmas. The Ado theorem is one of the cornerstone results of today's structure theory of Lie algebras, used in
plenty of proofs and arguments. When pretending to give a new proof of such a 
basic result, one should be especially careful not to fall into the trap of 
circular arguments. That is why, even when using known and/or elementary 
results, we at least outline their proofs. We also carefully isolate places 
where we need assumptions such as nilpotency of the algebra and characteristic 
zero of the ground field.

Being different, our proof still shares with all the previous proofs some 
elementary (and seemingly unavoidable in this context) tricks and observations.
In particular, as for a centerless Lie algebra the Ado theorem is trivial 
(the adjoint representation will do), we will concentrate on central elements, 
and take the direct sum of representations to assemble ``local'' Ado properties
(nonvanishing on particular elements) into the ``global'' one (faithfulness).

In Section \ref{sec-alt} we speculate about possibility to extend this proof to the 
case of positive characteristic.

\section{The proof}\label{sec-proof}

In what follows, the ground field $K$ is arbitrary, and all algebras, modules 
and vector spaces are finite-dimensional, unless stated otherwise.

A Lie algebra structure on the tensor product $L \otimes A$ of a Lie algebra 
$L$ and an associative commutative algebra $A$ is defined by the obvious 
factor-wise multiplication: $[x \otimes a, y \otimes b] = [x,y] \otimes ab$ 
for $x,y \in L, a,b \in A$ (such Lie algebras are dubbed as 
\emph{current Lie algebras}, the term coming from physics where they play a 
role).

\begin{lemma}[\sc Embedding of graded algebras into a tensor product]\label{lemma-emb}
An $\mathbb N$-graded Lie algebra $L$ is embedded into the Lie algebra 
$L \otimes tK[t]/(t^n)$ for some $n \in \mathbb N$.
\end{lemma}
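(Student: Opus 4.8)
The plan is to use the tautological degree-tracking map. Writing the grading as $L = \bigoplus_{i \ge 1} L_i$ with $[L_i, L_j] \subseteq L_{i+j}$, and noting that $L$ is finite-dimensional so that $L_i = 0$ for all $i$ above some top degree $N$, I would set $n = N+1$ and define $\phi \colon L \to L \otimes tK[t]/(t^n)$ on homogeneous elements by $\phi(x) = x \otimes t^i$ for $x \in L_i$, extended linearly. The reason for taking $n = N+1$ is that then $tK[t]/(t^n)$ has basis $t, t^2, \dots, t^N$, so $t^i \neq 0$ for every degree $i$ that actually occurs in $L$; this is precisely what will force injectivity.

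The next step is to check that $\phi$ is a homomorphism of Lie algebras, and it suffices to verify this on homogeneous elements. For $x \in L_i$ and $y \in L_j$ one has $[x,y] \in L_{i+j}$, hence $\phi([x,y]) = [x,y] \otimes t^{i+j}$, while by the definition of the current bracket $[\phi(x), \phi(y)] = [x \otimes t^i, y \otimes t^j] = [x,y] \otimes t^{i+j}$; the two agree.

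The one place where the choice of $n$ genuinely matters is the borderline case $i + j > N$. There $[x,y] \in L_{i+j} = 0$ on the Lie side, while on the tensor side $t^{i+j} = 0$ in $tK[t]/(t^n)$ because $i + j \ge n$; so both sides vanish. Thus the truncation of the polynomial factor is exactly calibrated to the vanishing of high-degree brackets in $L$, and this compatibility is the only point requiring any care.

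Finally, for injectivity: the distinct homogeneous layers are sent into the distinct subspaces $L_i \otimes Kt^i$ of the target, which are linearly independent since $t, \dots, t^N$ are, and on each layer the map $x \mapsto x \otimes t^i$ is injective precisely because $t^i \neq 0$ for $1 \le i \le N$. Hence $\Ker \phi = 0$ and $\phi$ is the desired embedding. I do not expect a real obstacle here: the construction is the obvious one, and the only thing that demands attention is the bookkeeping that ties the truncation index $n$ to the top degree of the grading, so that everything needed survives while everything superfluous is killed consistently on both sides.
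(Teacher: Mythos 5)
Your proposal is correct and is essentially the paper's own proof: the same map $x \mapsto x \otimes t^i$ on homogeneous components, with the truncation index chosen just past the top degree of the grading so that high-degree brackets vanish compatibly on both sides (the paper phrases this as the embedding into $L \otimes tK[t]$ factoring through the ideal $L \otimes t^nK[t]$, while you verify the homomorphism property directly in the quotient, which amounts to the same borderline-case check). Nothing is missing.
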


(We denote by $\mathbb N$ the set of all positive integers, so the algebras we 
consider are positively-graded).

\begin{proof}
Let $L = \bigoplus_{i=1}^{n-1} L_i$ be the $\mathbb N$-grading (necessarily 
finite, as $L$ is finite-dimensional). The algebra $L$ is embedded into 
$L \otimes tK[t]$ via $x \mapsto x \otimes t^i$, where 
$x\in L_i$, $1 \le i \le n-1$. Since $[L_i \otimes t^i, L_j \otimes t^j] = 0$ 
if $i+j \ge n$, this embedding factors through the ideal $L \otimes t^n K[t]$.
\end{proof}

\begin{remark}
The statement is obviously true for arbitrary, not necessary Lie, 
$\mathbb N$-graded algebras.
\end{remark}

For the purpose of this note, a representation $\rho$ of a Lie algebra $L$
(or the corresponding $L$-module) will be called \emph{nilpotent}, if $\rho(x)$
is a nilpotent linear map for any $x\in L$ (of course, due to the Engel theorem,
this is equivalent to existence of a positive integer $n$ such that 
$\rho(x_1) \cdots \rho(x_n) = 0$ for any $x_1, \dots, x_n \in L$, but we will not need that).

Recall that given a representation $\rho: L \to \End(V)$ of a Lie algebra $L$,
an \emph{$1$-cocycle} is a linear map $\varphi: L \to V$ such that
$$
\varphi([x,y]) - \rho(x)(\varphi(y)) + \rho(y)(\varphi(x)) = 0
$$
for any $x,y\in L$. The space of all such $1$-cocycles is denoted by 
$\Z^1(L,V)$, and they may be thought as derivations of $L$ with values in the 
$L$-module $V$.

\begin{lemma}[\sc A nondegenerate cocycle implies Ado]\label{lemma-coc}
Let $L$ be a Lie algebra, $V$ an $L$-module 
(respectively, nilpotent $L$-module), and $\varphi$ is $1$-cocycle in 
$\Z^1(L,V)$ such that $\Ker \varphi = 0$. Then $L$ has a faithful 
representation (respectively, faithful nilpotent representation).
\end{lemma}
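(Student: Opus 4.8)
The plan is to enlarge the module $V$ by one dimension and let the cocycle supply an ``inhomogeneous'' term, so that the affine-type data $(\rho, \varphi)$ becomes a genuine linear representation. Concretely, I would set $W = V \oplus K$ and define $\tilde\rho\colon L \to \End(W)$ by
$$
\tilde\rho(x)(v, c) = (\rho(x)(v) + c\,\varphi(x),\ 0),
\qquad x \in L,\ v \in V,\ c \in K.
$$
The construction is rigged precisely so that the cocycle identity is exactly what is needed for $\tilde\rho$ to respect brackets.

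First I would verify that $\tilde\rho$ is a representation. Computing $\tilde\rho(x)\tilde\rho(y)(v,c)$ and antisymmetrizing in $x,y$, the contribution of $v$ yields $\rho([x,y])(v)$ because $\rho$ is already a representation, while the contribution of the scalar $c$ yields $c\,(\rho(x)(\varphi(y)) - \rho(y)(\varphi(x)))$. By the $1$-cocycle condition the latter equals $c\,\varphi([x,y])$, so $[\tilde\rho(x), \tilde\rho(y)] = \tilde\rho([x,y])$, as desired.

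Faithfulness is then immediate from $\Ker \varphi = 0$: if $\tilde\rho(x) = 0$, evaluating on $(0,1)$ forces $\varphi(x) = 0$, hence $x = 0$; this is the only place where nondegeneracy of $\varphi$ enters. For the nilpotent variant I would note that $\tilde\rho(x)$ always lands in $V \oplus 0$, so a one-line induction gives $\tilde\rho(x)^k(v,c) = (\rho(x)^k(v) + c\,\rho(x)^{k-1}(\varphi(x)),\ 0)$ for $k \ge 1$; thus $\rho(x)^N = 0$ implies $\tilde\rho(x)^{N+1} = 0$, and $\tilde\rho$ is a nilpotent representation whenever $V$ is a nilpotent module. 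I do not anticipate any serious obstacle: the whole content lies in choosing the correct one-dimensional extension, after which the cocycle condition does all the work and the remaining verifications are routine.
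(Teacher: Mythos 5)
Your proof is correct and is essentially the paper's own construction: the paper lets $L$ act on $V \oplus \Z^1(L,V)$ by $x\cdot(v,\psi) = \bigl(\rho(x)(v)+\psi(x),\,0\bigr)$, and your $W = V \oplus K$ is exactly the restriction of that action to the invariant subspace $V \oplus K\varphi$, a slightly more economical variant of the same linearization trick. The verification via the cocycle identity, the faithfulness argument through $\Ker\varphi = 0$, and the nilpotency bound all work as you describe, so nothing further is needed.
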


\begin{proof}
The required representation $\rho$ is given by an action of $L$ on 
$V \oplus \Z^1(L,V)$ (direct sum of vector spaces), defined naturally on the 
first direct summand, and via $\rho(x) (\psi) = \psi(x)$ for $x\in L$ and
$\psi \in \Z^1(L,V)$, on the second direct summand.
\end{proof}

\begin{remark}
It is possible to extend this statement, via induction, to higher-order 
cocycles, but we will not need this (but see Remark \ref{rem-dual} below).
\end{remark}

\begin{lemma}[\sc Ado for graded algebras]\label{lem-loc-grad}
An $\mathbb N$-graded Lie algebra over a field of characteristic zero has a 
faithful nilpotent representation.
\end{lemma}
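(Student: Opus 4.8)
The plan is to prove Lemma~\ref{lem-loc-grad} by combining the two previous lemmas with an inductive argument on the structure of the graded algebra. By Lemma~\ref{lemma-emb}, an $\mathbb N$-graded Lie algebra $L$ embeds into the current Lie algebra $L \otimes tK[t]/(t^n)$. The key observation I would exploit is that this larger algebra, being of the form $L \otimes \mathfrak{m}$ where $\mathfrak{m} = tK[t]/(t^n)$ is a nilpotent (in fact, with trivial multiplication in high degrees) commutative algebra, has favourable properties for constructing representations. In particular, since it suffices to find a faithful nilpotent representation of the \emph{ambient} algebra $L \otimes tK[t]/(t^n)$ (any faithful representation restricts to a faithful one on the subalgebra $L$, and nilpotency is inherited), I would shift attention entirely to current Lie algebras of this shape.

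The core of the argument, I expect, is to produce a suitable $1$-cocycle and then invoke Lemma~\ref{lemma-coc}. Given a representation $\rho: L \to \End(V)$, one can tensor it up to a representation of $L \otimes A$ on $V \otimes A$ (or on $V$ with $A$-coefficients), and the multiplication map of the commutative algebra $A = tK[t]/(t^n)$ should give rise, via the natural pairing, to a cocycle on the current algebra. The idea is that the element $t$ acting by shift provides a natural ``derivation-like'' map whose kernel one can control. Concretely, I would try to build $\varphi: L \otimes A \to V$ so that $\Ker\varphi = 0$, using the fact that distinct graded components are separated by distinct powers of $t$, and that multiplication by $t$ is \emph{nilpotent but injective enough} on the relevant pieces to detect every nonzero element of $L$. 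Here the characteristic-zero hypothesis and the nilpotency of $A$ must enter: I would set up the module $V$ so that $\rho$ is nilpotent, guaranteeing via the ``nilpotent'' clause of Lemma~\ref{lemma-coc} that the resulting faithful representation is itself nilpotent.

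The hard part, I anticipate, is constructing the nondegenerate cocycle explicitly and verifying both the cocycle condition and the injectivity $\Ker\varphi = 0$ simultaneously. The cocycle identity $\varphi([x,y]) = \rho(x)\varphi(y) - \rho(y)\varphi(x)$ couples the bracket of the current algebra (which mixes the $L$-bracket with multiplication in $A$) to the chosen action $\rho$, so the two structures must be matched carefully. The natural candidate is to take $V$ related to $L$ itself with $\rho$ built from the adjoint-type action, and $\varphi$ the canonical inclusion or a twist thereof, so that the cocycle condition reduces to the Jacobi identity or to associativity/commutativity of $A$; the injectivity should then follow because the grading guarantees that no nonzero homogeneous element can be annihilated. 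The subtlety that must be handled is the degree where multiplication in $A$ vanishes, exactly where the graded bracket degenerates, and this is precisely the place where the embedding of Lemma~\ref{lemma-emb} was arranged to preserve faithfulness. Once the cocycle is in hand and shown nondegenerate and nilpotent, Lemma~\ref{lemma-coc} delivers the faithful nilpotent representation and the proof concludes.
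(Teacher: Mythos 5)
Your scaffolding coincides with the paper's: embed $L$ into $L \otimes tK[t]/(t^n)$ by Lemma~\ref{lemma-emb}, observe that it suffices to find a faithful nilpotent representation of the ambient current algebra, and apply Lemma~\ref{lemma-coc} to a kernel-free $1$-cocycle with values in the adjoint module. But in a proof this short the cocycle \emph{is} the proof, and you never produce it; worse, the candidates you float in its place do not work. ``The element $t$ acting by shift,'' i.e.\ $x \otimes a \mapsto x \otimes ta$, is not a derivation of $L \otimes tK[t]/(t^n)$: applied to $[x \otimes t^i, y \otimes t^j]$ it gives $[x,y]\otimes t^{i+j+1}$, while the Leibniz rule demands $2\,[x,y]\otimes t^{i+j+1}$; and since $1$-cocycles with adjoint values are exactly derivations, it is not a cocycle either. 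Likewise the ``canonical inclusion'' $\id$ is a derivation only of an abelian algebra, because $\id([u,v]) = [\id(u),v]+[u,\id(v)]$ forces $[u,v]=0$. The object you are missing is the Euler (grading) derivation
$$
\varphi \;=\; \id_L \otimes\, t\frac{\dcobound}{\dcobound t}\,,
\qquad x \otimes t^i \;\mapsto\; i\, x \otimes t^i ,
$$
which is a derivation precisely because the bracket adds $t$-degrees, and which annihilates no nonzero element $\sum_{i=1}^{n-1} x_i \otimes t^i$ because the eigenvalues $1,\dots,n-1$ are nonzero in characteristic zero. (In fact, one can even skip the embedding: the grading derivation $x_i \mapsto i\,x_i$ of $L$ itself, together with the observation that the adjoint module of a finite-dimensional $\mathbb N$-graded algebra is nilpotent, feeds directly into Lemma~\ref{lemma-coc}.)

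You also misplace where the hypothesis on the field enters. It is not needed to make the module nilpotent: the adjoint module of $L \otimes tK[t]/(t^n)$ is nilpotent in any characteristic, since $\mathrm{ad}(u)$ raises $t$-degree by at least one, so $\mathrm{ad}(u)^{n-1} = 0$. Characteristic zero is needed exactly at the step you wave through with ``the injectivity should then follow because the grading guarantees that no nonzero homogeneous element can be annihilated'': the grading derivation multiplies a degree-$i$ element by $i$, so in characteristic $p>0$ it kills $x \otimes t^p$ -- this is precisely the failure mode the paper points out in Section~\ref{sec-alt}. The grading alone guarantees nothing; the invertibility of the integers $1,\dots,n-1$ in $K$ is the whole point, and your proposal never invokes it.
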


\begin{proof}
Let $L$ be such Lie algebra. By Lemma \ref{lemma-emb}, $L$ is embedded into 
$L \otimes tK[t]/(t^n)$. The derivation (i.e., $1$-cocycle with values in the 
adjoint module) $\id_L \otimes\, t\frac{\dcobound}{\dcobound t}$ of 
$L \otimes tK[t]/(t^n)$ acts on a nonzero element 
$\sum_{i=1}^{n-1} x_i \otimes t^i$ non-trivially, hence has zero kernel. By
Lemma \ref{lemma-coc}, $L \otimes tK[t]/(t^n)$ has a faithful nilpotent 
representation. Hence so does its subalgebra $L$.
\end{proof}

\begin{lemma}[\sc Ado for free nilpotent algebras]\label{lemma-free-nilp}
A free nilpotent Lie algebra of finite rank over a field of characteristic zero
has a faithful nilpotent representation.
\end{lemma}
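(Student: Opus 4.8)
The plan is to reduce this statement immediately to Lemma~\ref{lem-loc-grad} by exhibiting a natural $\mathbb N$-grading on a free nilpotent Lie algebra; the whole argument then rests on a single structural observation, namely that free nilpotent Lie algebras are graded.

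First I would recall the construction. The free Lie algebra $F$ on generators $x_1,\dots,x_r$ carries its standard grading $F = \bigoplus_{i \ge 1} F_i$, where $F_i$ is spanned by the iterated brackets of length $i$ (so the generators sit in degree $1$). The free nilpotent Lie algebra of class $c$ is the quotient $F/I$, where $I = \bigoplus_{i>c} F_i$ is the ideal consisting of all brackets of length exceeding $c$. Since $I$ is a homogeneous ideal, the grading descends to the quotient, giving $F/I = \bigoplus_{i=1}^{c} F_i$; because the generators have positive degree, this is genuinely an $\mathbb N$-grading. Finiteness of the rank $r$ and of the nilpotency class $c$ ensures that each component $F_i$ is finite-dimensional (by, say, Witt's dimension formula, whose precise form we do not need), so the algebra is finite-dimensional, as the standing assumption requires.

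With the grading in hand, the conclusion follows at once from Lemma~\ref{lem-loc-grad}: an $\mathbb N$-graded Lie algebra over a field of characteristic zero has a faithful nilpotent representation, and a free nilpotent Lie algebra of finite rank is such an algebra. The only point demanding care --- and it is the sole ``obstacle'', though a mild one --- is verifying that the defining ideal $I$ really is homogeneous, so that the grading passes cleanly to the quotient rather than being destroyed. This is a standard feature of free Lie algebras: the filtration by bracket length coincides with the lower central series, and the relations imposed in forming the free nilpotent algebra are themselves homogeneous, so no mixing of degrees occurs.
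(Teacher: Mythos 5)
Your proposal is correct and follows exactly the paper's route: the paper's own proof consists precisely of the observation that a free nilpotent Lie algebra of finite rank is finite-dimensional and $\mathbb N$-graded, followed by an appeal to Lemma~\ref{lem-loc-grad}. You have merely filled in the (standard and correct) details of why the grading by bracket length descends from the free Lie algebra to its free nilpotent quotient.
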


\begin{proof}
A free nilpotent Lie algebra of finite rank is finite-dimensional and 
$\mathbb N$-graded. Apply Lemma \ref{lem-loc-grad}.
\end{proof}

\begin{lemma}[\sc Local Ado implies global Ado\footnote{
A reader of an earlier draft of this note remarked that after the present array
of lemmas, one might expect the next one to be called {\sc Much Ado about nothing}.
The lemmas are elementary indeed.
}]\label{lemma-local}
Let $L$ be a Lie algebra such that for any nonzero $x \in L$ there is a 
nilpotent representation $\rho_x$ of $L$ such that $\rho_x(x) \ne 0$. Then $L$ 
has a faithful nilpotent representation.
\end{lemma}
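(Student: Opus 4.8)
The plan is to take a direct sum of the ``local'' representations furnished by the hypothesis, one for each element, so that the combined representation vanishes on no nonzero element and is therefore faithful. The only subtlety is that $L$ has infinitely many elements (when $K$ is infinite), so I cannot naively form an infinite direct sum and stay within finite dimensions; the work is in reducing to a finite family.

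\medskip

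First I would observe that the condition $\rho_x(x)\neq 0$ is an \emph{open} condition in a suitable sense, and that a single representation already detects many elements at once. Concretely, for a nilpotent representation $\rho$ of $L$, the set $\{y\in L \mid \rho(y)=0\}=\Ker\rho$ is a linear subspace of $L$ (indeed an ideal). So for each nonzero $x$ the hypothesis produces $\rho_x$ with $x\notin\Ker\rho_x$, i.e.\ a proper subspace $\Ker\rho_x\subsetneq L$ avoiding $x$. The goal ``faithful'' means building a representation whose kernel is $0$, and the kernel of a direct sum $\bigoplus_i\rho_i$ is $\bigcap_i\Ker\rho_i$. Thus I must choose finitely many representations $\rho_1,\dots,\rho_m$ among the $\rho_x$ so that $\bigcap_{i=1}^m\Ker\rho_i=0$.

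\medskip

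The reduction to finitely many is the main point, and it is pure linear algebra: I am given a family of proper subspaces $W_x=\Ker\rho_x$ of the finite-dimensional space $L$ with $\bigcap_{x\neq 0}W_x=0$ (since each nonzero $x$ is excluded by at least its own $W_x$), and I want a finite subfamily with zero intersection. This follows by a descending-dimension argument: start with any $W_{x_1}$; if its intersection with the chosen subspaces so far is nonzero, pick a nonzero vector $v$ in that intersection and choose $W_{x}$ with $v\notin W_{x}$, which strictly drops the dimension of the running intersection. Since $\dim L$ is finite, after at most $\dim L$ steps the intersection is $0$. This yields $\rho_1,\dots,\rho_m$ with $\bigcap_{i=1}^m\Ker\rho_i=0$.

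\medskip

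Finally I would set $\rho=\bigoplus_{i=1}^m\rho_i$ acting on $\bigoplus_{i=1}^m V_i$. Its kernel is $\bigcap_{i=1}^m\Ker\rho_i=0$, so $\rho$ is faithful; and each $\rho_i(x)$ is nilpotent, so each $\rho(x)$, being block-diagonal with nilpotent blocks, is again nilpotent, whence $\rho$ is a faithful \emph{nilpotent} representation. The step I expect to require the most care is confirming that the local kernels are genuinely linear subspaces (so that the finite-subfamily extraction applies), but this is immediate since $\rho_x$ is linear; everything else is the elementary direct-sum bookkeeping already used in Lemma \ref{lemma-coc}.
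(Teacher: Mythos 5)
Your proof is correct and follows essentially the same route as the paper: both arguments iteratively enlarge a direct sum of the local representations, using the hypothesis to pick a representation detecting a nonzero element of the current kernel, so that the kernel (intersection of kernels) strictly decreases and must vanish after finitely many steps by finite-dimensionality. Your phrasing in terms of extracting a finite subfamily of proper subspaces with zero intersection is just a repackaging of the paper's descending-kernel induction, and the direct-sum and nilpotency bookkeeping is identical.
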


\begin{proof}
Pick a nonzero element $x_1\in L$, and set $\rho_1 = \rho_{x_1}$. If $\rho_1$ 
is faithful, we are done, if not, there is a nonzero $x_2 \in L$ such that 
$\rho_1(x_2) = 0$. Set $\rho_2 = \rho_1 \oplus \rho_{x_2}$. Note that
$$
\Ker \rho_2 = \Ker \rho_1 \cap \Ker \rho_{x_2} \subset \Ker \rho_1 ,
$$
and the inclusion is strict, since $x_2$ does not belong to the left-hand side,
but belongs to the right-hand side. Repeating this process, we get a series
of representations $\rho_1, \rho_2, \dots$, with strictly decreasing kernels.
Since $L$ is finite-dimensional, this process will terminate in a finite number
of steps on a faithful representation $\rho$. Being the direct sum of nilpotent
representations, $\rho$ is also nilpotent.
\end{proof}

\begin{lemma}[\sc Abundance of ideals of codimension $1$]\label{lem-id}
Let $I$ be a nonzero ideal of a nilpotent Lie algebra $L$. Then $I$ contains an ideal $J$ of $L$ of codimension $1$ in $I$ such that
$[L,I] \subseteq J$.
\end{lemma}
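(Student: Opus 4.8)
The plan is to extract from nilpotency the single fact that $[L,I]$ is a \emph{proper} subspace of $I$; once that is secured, the ideal $J$ can be produced by pure linear algebra, with no further use of the Lie structure.

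First I would note that $[L,I]$ is itself an ideal of $L$, being the bracket of two ideals; concretely, for $x,y \in L$ and $i \in I$ the Jacobi identity gives $[x,[y,i]] = [[x,y],i] + [y,[x,i]]$, and both summands lie in $[L,I]$. The crucial step is the strict inclusion $[L,I] \subset I$, and this is exactly where nilpotency is needed. Indeed, were $[L,I] = I$, iterating the bracket with $L$ would yield $I = [L,[L,\dots,[L,I]\dots]] \subseteq L^{k+1}$ for every $k$, where $L^{k+1}$ denotes the $(k+1)$-st term of the lower central series of $L$; since $L$ is nilpotent this gives $I = 0$, contradicting the assumption that $I$ is nonzero.

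Granting $[L,I] \subset I$, I would pass to the quotient vector space $I/[L,I]$, which is nonzero and carries the trivial $L$-action by the very definition of $[L,I]$. Choosing any subspace of $I/[L,I]$ of codimension $1$ (possible as its dimension is at least $1$) and pulling it back along the projection $I \to I/[L,I]$ produces a subspace $J$ of $I$ that contains $[L,I]$ and has codimension $1$ in $I$. This $J$ is automatically an ideal of $L$, since $[L,J] \subseteq [L,I] \subseteq J$, and it satisfies $[L,I] \subseteq J$ by construction, so all three requirements hold.

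The only genuine obstacle is the strict inclusion $[L,I] \subset I$; everything afterward is a routine codimension-one construction requiring no additional input. I would also emphasize that characteristic zero plays no role here — the lemma rests purely on nilpotency — so, unlike several of the earlier lemmas, this one remains valid over an arbitrary field.
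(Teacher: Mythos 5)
Your proof is correct, and it takes a genuinely different route from the paper's. The paper invokes (citing Bourbaki) a complete flag of ideals $0 = I_0 \subset I_1 \subset \dots \subset I_n = L$ with $\dim I_i/I_{i-1} = 1$ and $[L,I_i] \subseteq I_{i-1}$, picks the minimal $k$ with $I \subseteq I_k$, and sets $J = I \cap I_{k-1}$; the codimension-one and $[L,I] \subseteq J$ claims then follow from minimality of $k$. You instead isolate the single consequence of nilpotency that matters -- the strict inclusion $[L,I] \subsetneq I$, proved by the Nakayama-style iteration $I = [L,I] = [L,[L,I]] \subseteq L^{k+1}$ for all $k$ -- and then produce $J$ as an arbitrary hyperplane of $I$ containing $[L,I]$, which is automatically an ideal since $[L,J] \subseteq [L,I] \subseteq J$. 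Your version is more self-contained (it needs no Jordan--H\"older-type refinement, only the definition of the lower central series) and makes transparent exactly where nilpotency enters; the paper's version outsources the nilpotency bookkeeping to a standard cited fact about flags of ideals, which keeps its proof to a few lines and fits the author's stated policy of leaning on known elementary results while sketching them. At bottom the two arguments are cousins -- the Bourbaki flag is itself built by refining the lower central series -- but as written they are structurally distinct, and yours is arguably the cleaner of the two. Your closing remark that the lemma needs no hypothesis on the characteristic is also consistent with the paper, which lists only Lemmas \ref{lem-loc-grad} and \ref{lemma-comb} as the places where characteristic zero is used.
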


\begin{proof}
This elementary result follows almost immediately from the definition of a 
nilpotent Lie algebra by a rudimentary Jordan--H\"older-like argument. Namely,
there is a chain of ideals of $L$
$$
0 = I_0 \subset I_1 \subset \dots \subset I_{n-1} \subset I_n = L
$$
such that $\dim I_i/I_{i-1} = 1$ and $[L,I_i] \subseteq I_{i-1}$ for any 
$i=1,2,\dots,n$ (cf., e.g., \cite[\S 4, Proposition 1]{bourbaki}). The set 
$\set{i}{I \subseteq I_i}$ contains $n$ (and hence is nonempty), and does not 
contain $0$ (since $I$ is nonzero). Let $k$ be the minimal element in this set.
Then
$$
0 < \dim I/(I \cap I_{k-1}) = \dim (I + I_{k-1})/I_{k-1} \le \dim I_k/I_{k-1} 
= 1 ,
$$
and hence the ideal $I \cap I_{k-1}$ of $L$ is of codimension $1$ in $I$. We 
also have
$$
[L,I] \subseteq I \cap [L,I_k] \subseteq I \cap I_{k-1} ,
$$
as required.
\end{proof}

\begin{lemma}[\sc Factorization of linear maps]\label{lemma-lin}
Let $V$ be a vector space, $f,g$ linear maps in $\End(V)$, and 
$\Ker f \subseteq \Ker g$. Then there is a linear map $h$ in $\End(V)$ such
that $g = h \circ f$.
\end{lemma}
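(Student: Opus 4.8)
The plan is to build $h$ first on the image of $f$, where the desired identity $g = h \circ f$ already forces its values, and then to extend it arbitrarily to all of $V$. So for each $w \in \im f$ I would set $h(w) = g(v)$, where $v$ is any vector with $f(v) = w$. The crucial point --- and the only place the hypothesis $\Ker f \subseteq \Ker g$ enters --- is that this prescription is well-defined: if $f(v) = f(v') = w$, then $v - v' \in \Ker f \subseteq \Ker g$, whence $g(v) = g(v')$, so the value assigned to $w$ does not depend on the chosen preimage.

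Next I would verify that the map $h \colon \im f \to V$ thus obtained is linear, which is immediate from the linearity of $f$ and $g$: if $w_1 = f(v_1)$ and $w_2 = f(v_2)$, then for scalars $\alpha, \beta$ the combination $\alpha v_1 + \beta v_2$ is a preimage of $\alpha w_1 + \beta w_2$, so $h(\alpha w_1 + \beta w_2) = g(\alpha v_1 + \beta v_2) = \alpha\, h(w_1) + \beta\, h(w_2)$.

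Finally, since $V$ is finite-dimensional, I would pick a complement $W$ with $V = \im f \oplus W$ and extend $h$ to all of $V$ by, say, declaring $h|_W = 0$ (any linear extension of the partially defined map serves equally well). The resulting $h$ belongs to $\End(V)$, and by construction $h(f(v)) = g(v)$ for every $v \in V$, that is, $g = h \circ f$. None of the steps is a genuine obstacle; the one substantive observation is the well-definedness of $h$ on $\im f$, which is precisely the content of the kernel inclusion, and the extension step merely relies on the existence of a complementary subspace in finite dimension.
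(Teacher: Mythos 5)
Your proof is correct and follows essentially the same route as the paper: define $h$ on $\im f$ so that $h(f(v)) = g(v)$, use the inclusion $\Ker f \subseteq \Ker g$ to verify well-definedness, and extend by zero on a complement of $\im f$. The only cosmetic difference is that the paper phrases the definition via a basis ($f(e_i) \mapsto g(e_i)$) while you define $h$ directly on preimages, which is equivalent.
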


\begin{proof}
This is an elementary linear algebra (cf., e.g., 
\cite[Proposition 6.8]{kostr-manin}). Fix a basis $e_1, \dots, e_n$ in $V$, and
define a linear map $h: \im f \to V$ by sending $f(e_i)$ to $g(e_i)$, 
$i=1,\dots,n$. This map is well-defined, since if 
$\sum_i \lambda_i f(e_i) = \sum_i \mu_i f(e_i)$ for some 
$\lambda_i, \mu_i \in K$, then $\sum_i (\lambda_i - \mu_i) e_i$ lies in $\Ker f$
and hence in $\Ker g$, and, consequently,
$h\big(\sum_i \lambda_i f(e_i)\big) = h\big(\sum_i \mu_i f(e_i)\big)$. Extend 
$h$ to the whole $V$ arbitrarily, say, by mapping a subspace complementary to 
$\im f$ to zero.
\end{proof}

The next lemma contains the core arguments, of combinatorial character.

\begin{lemma}[\sc Distinguishing elements by representation kernels]\label{lemma-comb}
Let $L$ be a Lie algebra over a field of characteristic zero, having a faithful 
nilpotent representation. Then for any two linearly independent elements 
$x,y \in L$, there is a nilpotent representation $\rho$ of $L$ such that 
$\Ker \rho(x) \not\subset \Ker \rho(y)$. 
\end{lemma}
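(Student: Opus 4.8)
The plan is to work with the given faithful nilpotent representation $\sigma\colon L\to\End(V)$ and the two nilpotent operators $A=\sigma(x)$ and $B=\sigma(y)$, which are linearly independent since $\sigma$ is faithful. The only representations I will need are those built from $\sigma$ by standard functorial operations: direct sums, the tensor powers $\sigma^{\otimes k}$ on $V^{\otimes k}$, and the adjoint-type representation $z\mapsto[\sigma(z),\,\cdot\,]$ on $\End(V)$. Each of these is again nilpotent (a tensor power sends $z$ to a sum of pairwise commuting nilpotent operators, and $[\sigma(z),\,\cdot\,]$ is nilpotent whenever $\sigma(z)$ is). First I would dispose of the easy case: if $\Ker A\not\subseteq\Ker B$, then $\rho=\sigma$ already works. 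So from now on I may assume $\Ker A\subseteq\Ker B$, whence $B=hA$ for some $h\in\End(V)$ by Lemma~\ref{lemma-lin}. I will argue by contradiction, assuming that $\Ker\rho(x)\subseteq\Ker\rho(y)$ for \emph{every} nilpotent representation $\rho$, and deriving that $A$ and $B$ are proportional, contradicting the linear independence of $x$ and $y$.

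The key device is an exponential reformulation available in characteristic zero: for a nilpotent $T$ one has $Tv=0$ if and only if $\exp(tT)v=v$ for all $t\in K$ (the left-hand side being the coefficient of $t$ in the polynomial $\exp(tT)v-v$). Applying this to $T=\sigma^{\otimes k}(x)=\sum_{j}\id^{\otimes(j-1)}\otimes A\otimes\id^{\otimes(k-j)}$ and using $\exp\!\big(t\,\sigma^{\otimes k}(x)\big)=\exp(tA)^{\otimes k}$, I identify $\Ker\sigma^{\otimes k}(x)$ with the space of vectors in $V^{\otimes k}$ fixed by the one-parameter unipotent group $\{\exp(tA)\}$ acting diagonally, and similarly for $y$. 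Thus the lemma becomes the assertion that the one-parameter groups attached to linearly independent $A$ and $B$ are genuinely different and hence separated by their fixed spaces in some tensor power. To begin exploiting the contradiction hypothesis, I would feed in the adjoint representation: it yields $\Ker[A,\,\cdot\,]\subseteq\Ker[B,\,\cdot\,]$, i.e.\ the centralizer of $A$ is contained in that of $B$. Since the bicommutant of a single operator is the algebra $K[A]$, this forces $B\in K[A]$, and nilpotency of $B$ kills the constant term, so $B=c_1A+c_2A^2+\cdots$.

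It remains to show that the higher-degree coefficients $c_2,c_3,\dots$ all vanish, and this combinatorial step is the one I expect to be the main obstacle. The point is that on the fixed space $\Ker\sigma^{\otimes k}(x)$ the linear term $c_1A$ is invisible, since it acts as $\sum_j A^{(j)}$ and so annihilates that space; a term $c_jA^j$ with $j\ge2$, however, does not collapse. Indeed, already on $V\otimes V$, if $(A\otimes\id+\id\otimes A)v=0$ then, expanding $(A\otimes\id+\id\otimes A)^j$ binomially, $(A^j\otimes\id+\id\otimes A^j)v=-\sum_{l=1}^{j-1}\binom{j}{l}(A^l\otimes A^{j-l})v$, so that $\sigma^{\otimes2}(y)$ restricted to the fixed space is governed by genuine cross terms that the single lowering operator cannot produce. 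The plan is then to isolate the lowest surviving coefficient $c_i$ (the least $i\ge2$ with $c_i\ne0$) by evaluating on a carefully chosen fixed tensor adapted to a Jordan string of $A$, one that routes the cross terms into a single low-degree coordinate, and to read off that this coordinate is a nonzero multiple of $c_i$. The hard part will be to carry this out uniformly for an arbitrary nilpotent $A$, that is, for an arbitrary Jordan type, and to guarantee that the extracted coefficient does not accidentally cancel. Once this is established, all $c_j$ with $j\ge2$ vanish, $B=c_1A$ contradicts the linear independence of $x$ and $y$, and the desired $\rho$ is exhibited as one of the tensor powers $\sigma^{\otimes k}$.
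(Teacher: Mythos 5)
Your first step is correct and genuinely different from the paper's route: feeding the adjoint-type representation $z\mapsto[\sigma(z),\,\cdot\,]$ into the contradiction hypothesis gives $\Ker[A,\,\cdot\,]\subseteq\Ker[B,\,\cdot\,]$, hence $B$ lies in the bicommutant of the single operator $A$, which is $K[A]$; nilpotency then kills the constant term, so $B=c_1A+c_2A^2+\cdots$. (The paper instead factors $\rho(y)=h_\rho\circ\rho(x)$ via Lemma \ref{lemma-lin} for \emph{every} nilpotent representation $\rho$ and compares arbitrary pairs of representations on tensor products, extracting uniform constants.) But from that point on your text is a plan, not a proof: the vanishing of $c_2,c_3,\dots$ is exactly the combinatorial core of the lemma --- the counterpart of the paper's descending induction culminating in the binomial coefficient of (\ref{eq-binom}) --- and you explicitly leave it as ``the main obstacle'' whose uniform treatment ``will be the hard part''. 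So the proposal is incomplete precisely where the lemma is hard.

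Moreover, the one concrete mechanism you describe --- cross terms of the binomial expansion on $V\otimes V$ --- provably cannot do the job by itself. Take $A$ a single nilpotent Jordan block of size $4$ (so $Ae_i=e_{i+1}$, $Ae_3=0$) and $B=A^3$; this is realized by a faithful nilpotent representation of the $2$-dimensional abelian Lie algebra with $\sigma(x)=A$, $\sigma(y)=A^3$. The kernel of $A\otimes\id+\id\otimes A$ is spanned by the four vectors $e_3\otimes e_3$, $e_2\otimes e_3-e_3\otimes e_2$, $e_1\otimes e_3-e_2\otimes e_2+e_3\otimes e_1$, and $e_0\otimes e_3-e_1\otimes e_2+e_2\otimes e_1-e_3\otimes e_0$, and a direct check shows that $A^3\otimes\id+\id\otimes A^3$ annihilates all four: for the last vector the two surviving terms $e_3\otimes e_3$ enter with opposite signs because the exponent $3$ is odd. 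So here $\Ker\sigma^{\otimes 2}(x)\subseteq\Ker\sigma^{\otimes 2}(y)$ even though $B\notin KA$: the tensor square is blind to odd-degree coefficients, and one must pass to higher tensor powers (for this example $k=3$ does detect $A^3$, a fixed tensor being sent to $3\gamma\, e_3\otimes e_3\otimes e_3$ with $\gamma\ne 0$), with combinatorics that depend on the Jordan type of $A$ and the parity and size of the exponent. Carrying this out uniformly for arbitrary nilpotent $A$ is a genuine piece of work, not a routine verification; alternatively this step could be closed by invoking observability of unipotent subgroups of the general linear group (so that the one-parameter group $\exp(tA)$ is the full stabilizer of its fixed vectors in suitable mixed tensor spaces), but that is heavy external machinery nowhere present in your sketch. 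As it stands, the argument has a gap at its decisive step.
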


\begin{proof}
Suppose the contrary: there are two linearly independent elements $x,y \in L$ 
such that for any nilpotent representation $\rho$ of $L$, 
$\Ker \rho(x) \subseteq \Ker \rho(y)$. By Lemma \ref{lemma-lin}, 
\begin{equation}\label{eq-tau}
\rho(y) = h_\rho \circ \rho(x)
\end{equation} 
for some linear map $h_\rho$.

Let $\rho: L \to \End(V)$ and $\tau: L \to \End(W)$ be two nilpotent 
representations of $L$, and let $n$ and $m$ be indices of nilpotency of 
$\rho(x)$ and $\tau(x)$, respectively. The tensor product $\rho \otimes \tau$ is
also nilpotent, so writing the condition (\ref{eq-tau}) for $\rho \otimes \tau$
and for vectors 
$\rho(x)^{n-2}(v) \otimes \tau(x)^{m-1}(w)$ and
$\rho(x)^{n-1}(v) \otimes \tau(x)^{m-2}(w)$, and taking into account the same 
condition for $\rho$ and $\tau$, we get respectively
$$
h_\rho\Big(\rho(x)^{n-1}(v)\Big) \otimes \tau(x)^{m-1}(w) = 
h_{\rho \otimes \tau}\Big(\rho(x)^{n-1}(v) \otimes \tau(x)^{m-1}(w)\Big)
$$
and
$$
\rho(x)^{n-1}(v) \otimes h_\tau\Big(\tau(x)^{m-1}(w)\Big) = 
h_{\rho \otimes \tau}\Big(\rho(x)^{n-1}(v) \otimes \tau(x)^{m-1}(w)\Big)
$$
for any $v \in V$, $w \in W$. This implies that the linear maps 
$h_\rho \otimes \id$ and $\id \otimes h_\tau$ coincide on the vector space 
$\rho(x)^{n-1}(V) \otimes \tau(x)^{m-1}(W)$, whence 
\begin{equation}\label{eq-h}
h_\rho\Big(\rho(x)^{n-1}(v)\Big) = \lambda \rho(x)^{n-1}(v)
\end{equation}
and
$$
h_\tau\Big(\tau(x)^{m-1}(w)\Big) = \lambda \tau(x)^{m-1}(w)
$$
for some $\lambda \in K$. Since this holds for any pair of representations
$\rho$, $\tau$, (\ref{eq-h}) holds for any nilpotent representation $\rho$ of 
$L$ for some uniform value $\lambda \in K$.

Further, writing the condition (\ref{eq-tau}) for the tensor product 
$\rho \otimes \tau$ for vectors 
$\rho(x)^{n-3}(v) \otimes \tau(x)^{m-1}(w)$,
$\rho(x)^{n-1}(v) \otimes \tau(x)^{m-3}(w)$, and
$\rho(x)^{n-2}(v) \otimes \tau(x)^{m-2}(w)$, and taking into account 
(\ref{eq-h}), we get respectively
\begin{gather*}
h_\rho\Big(\rho(x)^{n-2}(v)\Big) \otimes \tau(x)^{m-1}(w) =
h_{\rho \otimes \tau} \Big(\rho(x)^{n-2}(v) \otimes \tau(x)^{m-1}(w)\Big) ,
\\
\rho(x)^{n-1}(v) \otimes h_\tau\Big(\tau(x)^{m-2}(w)\Big) =
h_{\rho \otimes \tau} \Big(\rho(x)^{n-1}(v) \otimes \tau(x)^{m-2}(w)\Big) ,
\end{gather*}
and
\begin{multline}\label{eq-3}
\lambda \Big(
\rho(x)^{n-1}(v) \otimes \tau(x)^{m-2}(w) + 
\rho(x)^{n-2}(v) \otimes \tau(x)^{m-1}(w)
\Big) \\ = 
h_{\rho \otimes \tau} \Big(
\rho(x)^{n-1}(v) \otimes \tau(x)^{m-2}(w) +
\rho(x)^{n-2}(v) \otimes \tau(x)^{m-1}(w) 
\Big).
\end{multline}
Summing up the first two of these equalities, and subtracting the third one, we
get that the linear maps $(h_\rho - \lambda\id) \otimes \tau(x)$ and 
$\rho(x) \otimes (h_\tau - \lambda\id)$ coincide on the vector space 
$\rho(x)^{n-2}(V) \otimes \tau(x)^{m-2}(W)$\footnote{
Added July 29, 2018: This is wrong. The linear maps in question should not 
coincide, but their sum is identically equal to zero. Accordingly, the 
subsequent reasonings are simplified leading to the same conclusion 
$h_\rho \big(\rho(x)^{n-2}(v)\big) = \lambda \rho(x)^{n-2}(v)$. See a 
forthcoming text, joint with Abdenacer Makhlouf, ``Ado theorem for nilpotent 
Hom-Lie algebras'' for a correct (and more general) reasoning.
}, 
whence $h_\rho - \lambda\id = - \mu \rho(x)$ and 
$h_\tau - \lambda\id = - \mu \tau(x)$ for some $\mu \in K$ as linear maps on
$\rho(x)^{n-2}(V)$ and $\tau(x)^{m-2}(W)$, respectively. As this holds for any
pair of nilpotent representations $\rho$, $\tau$, we get that
\begin{equation}\label{eq-mu}
h_\rho\Big(\rho(x)^{n-2}(v)\Big) = 
\lambda\rho(x)^{n-2}(v) + \mu \rho(x)^{n-1}(v)
\end{equation}
for any nilpotent representation $\rho$ of $L$, for some uniform value 
$\mu \in K$.

On the other hand, the index of nilpotency of $(\rho \otimes \rho)(x)$ is 
equal to $2n-1$, so writing (\ref{eq-mu}) for the tensor square 
$\rho \otimes \rho$, and taking into account (\ref{eq-3}) in the case 
$\tau = \rho$ (and $m=n$), we get
\begin{equation}\label{eq-binom}
\binom{2n-2}{n-1} \> \mu \> \rho(x)^{n-1}(v) \otimes \rho(x)^{n-1}(w) = 0
\end{equation}
for any $v,w \in V$, what implies $\mu = 0$, and, according to (\ref{eq-mu}),
$$
h_\rho \Big(\rho(x)^{n-2}(v)\Big) = \lambda \rho(x)^{n-2}(v)
$$
for any nilpotent representation $\rho$ of $L$. 

Repeating this procedure (considering on each step the condition (\ref{eq-tau}) 
for the tensor product of two representations $\rho$, $\tau$, and for all 
vectors of the form $\rho(x)^i(v) \otimes \tau(x)^j(w)$ with $i+j$ equal to the
index of nilpotency of $\rho \otimes \tau$ minus $(k+2)$), we consecutively
arrive at the equalities
$$
h_\rho \Big(\rho(x)^k(v)\Big) = \lambda \rho(x)^k(v)
$$
for any $1 \le k \le n$. For $k=1$\footnote{
Added July 1, 2018: Actually, $0 \le k \le n-1$, and we should take $k=0$.
}
this means that $h_\rho = \lambda\id$, and hence $\rho(y - \lambda x)$ vanishes for any nilpotent representation $\rho$ of
$L$, whence $y - \lambda x = 0$, a contradiction.
\end{proof}

Finally, we can glue all of this together:

\begin{theorem}[\sc Ado for nilpotent algebras]
A nilpotent Lie algebra over a field of characteristic zero has a faithful 
nilpotent representation.
\end{theorem}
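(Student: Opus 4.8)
The plan is to run an induction ``from above'', starting from a free nilpotent Lie algebra, where the result is already available by Lemma~\ref{lemma-free-nilp}, and descending to $L$ by quotienting out one-dimensional central ideals one at a time. First I would realize $L$ as a quotient $L = F/I$, where $F$ is the free nilpotent Lie algebra of the appropriate rank (namely $\dim L/[L,L]$) and nilpotency class: lifting a basis of $L/[L,L]$ and invoking the universal property of $F$ produces a surjection $F \to L$ with kernel an ideal $I$ of $F$. If $I = 0$ then $L \cong F$ and Lemma~\ref{lemma-free-nilp} finishes the proof, so assume $I \ne 0$.

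Next I would peel $I$ off one dimension at a time. Applying Lemma~\ref{lem-id} repeatedly to the nilpotent algebra $F$ yields a chain of ideals of $F$
$$
I = I_0 \supset I_1 \supset \dots \supset I_k = 0, \qquad \dim I_j/I_{j+1} = 1, \quad [F, I_j] \subseteq I_{j+1}.
$$
The role of the condition $[F,I_j] \subseteq I_{j+1}$ is that in $M_j := F/I_{j+1}$ the one-dimensional subspace $\langle z_j \rangle := I_j/I_{j+1}$ is a \emph{central} ideal, so that $F/I_j \cong M_j/\langle z_j \rangle$ is a central quotient of $M_j$. The whole theorem then reduces to a single descent step, to be iterated along the chain from the base $F/I_k = F$ down to $F/I_0 = L$.

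The descent step is the following claim: if $M$ is a nilpotent Lie algebra with a faithful nilpotent representation and $z \in M$ spans a one-dimensional central ideal, then $M/\langle z \rangle$ again has a faithful nilpotent representation. By Lemma~\ref{lemma-local} it suffices to produce, for every nonzero $\bar x \in M/\langle z \rangle$, a nilpotent representation of $M/\langle z \rangle$ not annihilating $\bar x$. Lifting $\bar x$ to $x \in M$, the elements $x$ and $z$ are linearly independent, so Lemma~\ref{lemma-comb} supplies a nilpotent representation $\rho \colon M \to \End(V)$ with $\Ker \rho(z) \not\subset \Ker \rho(x)$; pick $v_0$ with $\rho(z)(v_0) = 0$ but $\rho(x)(v_0) \ne 0$. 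The key observation is that $W := \Ker \rho(z)$ is an $M$-submodule of $V$, because $z$ is central and hence $\rho(z)$ commutes with every $\rho(m)$. On $W$ the element $z$ acts trivially, so the restriction of $\rho$ to $W$ is a nilpotent representation factoring through $M/\langle z \rangle$; since $v_0 \in W$ and $\rho(x)(v_0) \ne 0$ again lies in $W$, this representation does not annihilate $\bar x$.

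The hard part is precisely this descent step, and within it the decision to feed the pair $(z,x)$ — rather than $(x,z)$ — into Lemma~\ref{lemma-comb}: one wants a vector killed by $z$ but not by $x$, so that passing to the invariant subspace $\Ker \rho(z)$ simultaneously kills $z$ (making the representation descend to $M/\langle z \rangle$) and keeps $x$ alive. Everything else is bookkeeping: the base case is Lemma~\ref{lemma-free-nilp}, the chain is produced by Lemma~\ref{lem-id}, and Lemma~\ref{lemma-local} assembles the local nonvanishing data into a single faithful representation. I would note in passing that a non-central $\bar x$ could instead be handled by the (nilpotent) adjoint representation of $M/\langle z \rangle$, so the combinatorial Lemma~\ref{lemma-comb} is genuinely needed only for central $\bar x$; but the argument above dispatches all $\bar x$ uniformly.
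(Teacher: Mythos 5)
Your proposal is correct and follows essentially the same route as the paper: present $L = F/I$ with $F$ free nilpotent (Lemma~\ref{lemma-free-nilp} as base case), use Lemma~\ref{lem-id} to descend by one-dimensional central ideals, and at each step feed the pair $(\tilde z, \tilde x)$ into Lemma~\ref{lemma-comb} so that the $M$-submodule $\Ker\rho(\tilde z)$ becomes a nilpotent module for the quotient on which $\bar x$ acts nontrivially, finishing with Lemma~\ref{lemma-local}. The only cosmetic difference is that you unroll the paper's induction on $\dim I$ into an explicit chain of ideals and a reusable one-step descent claim, which is the same argument in different packaging.
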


\begin{proof}
Present a nilpotent Lie algebra $L$ as a quotient of a free nilpotent Lie 
algebra $F$ of a finite rank: $L = F/I$. We will proceed by induction on the 
dimension of $I$. The case $I=0$ is covered by Lemma \ref{lemma-free-nilp}.

Suppose that $I$ is nonzero. By Lemma \ref{lem-id}, there is an ideal $J$ of $F$
such that $J \subset I$, $\dim I/J = 1$, and $[F,I] \subseteq J$. Consequently,
$S = F/J$ is an extension of $L$ by an one-dimensional central ideal, say 
$K\tilde{z}$. 

Take an arbitrary nonzero $x \in L$ and consider its preimage $\tilde{x}$ in 
$S$. By the induction assumption, $S$ has a faithful nilpotent representation, 
and by Lemma \ref{lemma-comb}, there is a nilpotent representation 
$\rho: S \to \End(V)$ such that 
\begin{equation}\label{eq-ker}
\Ker \rho(\tilde z) \not\subset \Ker\rho( \tilde x) .
\end{equation} 
Since $\tilde z$ lies in the center of $S$, $\rho(\tilde z)$ commutes with all 
maps from $\rho(S)$, and hence the space $\Ker\rho(\tilde z)$ is an 
$S$-submodule of $V$, which also carries a natural structure of a (nilpotent) 
$L$-module, on which $x$ acts nontrivially, due to (\ref{eq-ker}). By Lemma \ref{lemma-local}, $L$ has a faithful nilpotent 
representation.
\end{proof}

\begin{remark}\label{rem-dual}
In the proof above, we may try to take a dual route, utilizing images instead of
kernels. Namely, for any representation $\rho: S \to \End(V)$, the space 
$V/\im\rho(\tilde z)$ also carries a natural structure of an $L$-module. If 
$\im \rho(\tilde x) \not\subset \im \rho(\tilde z)$, then $x$ acts on the latter
module nontrivially, and this action is nilpotent if $\rho$ is nilpotent, so we
may assume that $\im \rho(\tilde x) \subseteq \im \rho(\tilde z)$ for any nilpotent 
representation $\rho$ of $S$. By the statement dual to Lemma \ref{lemma-lin}, 
the latter condition may be rewritten as a dual one to (\ref{eq-tau}):
$$
\rho(\tilde x) = \rho(\tilde z) \circ h_\rho
$$
for some linear map $h_\rho: V \to V$. Additionally, we may be helped by the 
fact that the element $\tilde x$ in these considerations may assumed to be 
central. Indeed, write the central extension $S$ of $L$ as the vector space 
direct sum $S = L \oplus K\tilde z$, with multiplication 
$$
\{u,v\} = [u,v] + \varphi(u,v)\tilde z
$$ 
for $u,v \in L$, where $\liebrack$ is multiplication in $L$, and $\varphi$ is a
$2$-cocycle on $L$ with values in $K$. Then we may apply a ``local'' version of 
Lemma \ref{lemma-coc} for $2$-cocycles to deduce that $\varphi(x,L) = 0$, i.e. 
$\tilde x$ is central in $S$. But even with this additional input, to prove the 
appropriate dual version of Lemma \ref{lemma-comb} seems to be more tricky, and
requires an extensive consideration of associative envelopes of $\rho(S)$'s in 
the appropriate matrix algebra.
\end{remark}

\section{An alternative route}\label{sec-alt}

In the proof of the Ado theorem for nilpotent algebras in 
Section \ref{sec-proof}, the characteristic zero of the ground field $K$ is needed in two places: first,
in the proof of Lemma \ref{lem-loc-grad}, to ensure that there is a derivation 
$D$ of $tK[t]/(t^n)$ such that $\sum_{i\ge 1} x_i \otimes D(t^i) \ne 0$ (what is
wrong if the characteristic of $K$ is $p>0$, and all the exponents $i$ in 
nonzero terms of $\sum_{i\ge 1} x_i \otimes t^i$ divide $p$); and second, in the
proof of Lemma \ref{lemma-comb}, to ensure that the binomial coefficients 
arising in binomial-like formulas for powers of Lie algebra elements actions on
tensor products of representations, like those in (\ref{eq-binom}), do not
vanish.

Here we outline a possible alternative approach which should include the case of
positive characteristic. In Section \ref{sec-proof} we apply an elementary embedding
of Lemma \ref{lemma-emb} to free nilpotent Lie algebras of finite rank. A little
bit more involved argument establishes a similar embedding for a broader class 
algebras. This argument is sometimes phrased in terms of ``generic elements'' or
``generic matrices'' and is often employed in the theory of varieties of Lie, 
associative, and other kinds of algebras. Namely, a relatively free algebra $F$ 
of no more then countable rank in a variety generated by a finite-dimensional 
Lie algebra $L$ is embedded into 
\begin{equation*}
L \otimes \Big(t_1 K[t_1, \dots, t_k] + \dots + t_k K[t_1, \dots, t_k]\Big)
\end{equation*}
for some $k \in \mathbb N$ (cf., e.g., \cite[Chapter I, Lemma 5.1]{razmyslov} or
\cite[Proof of Proposition 1.3]{zaicev}). In the case of nilpotent $L$ one can
do even better and embed $F$ into $L \otimes tK[t]$ (cf. 
\cite[Lemma 1.1]{zaicev}). Also, in the case of nilpotent $L$ this embedding 
obviously factors through the ideal $L \otimes t^n K[t]$, where $n$ is the index
of nilpotency of $L$, similarly as in Lemma \ref{lemma-emb}.

(Note parenthetically that this or similar arguments are often coupled with the
Ado theorem to establish an embedding of a relatively free algebra in some 
variety of Lie algebras into an algebra with that or another finiteness condition (cf., e.g., 
\cite{zaicev}). Here we reverse this line of reasonings and use this argument to
outline a possible route to the Ado theorem). 

As the adjoint representation is nonzero on non-central elements, in view of
Lemma \ref{lemma-local}, in order to establish the Ado theorem for $F$, it will
be enough to prove that for any nonzero central element of $F$, there is a 
representation not vanishing on that element. By an easy inductive argument, one
may reduce considerations to the case where the center $\Z(L)$ of $L$ is 
one-dimensional, so to cover the characteristic $p$ case in the proof of 
Lemma \ref{lem-loc-grad}, it will be enough to consider elements of $F$ of the 
form 
$$
z \otimes (\lambda_1 t^p + \lambda_2 t^{2p} + \dots) ,
$$ 
where $z \in \Z(L)$ and $\lambda_i \in K$, which probably could be dealt with 
using additional considerations based on relative freeness of $F$.

Additionally, one may try to employ derivations of $L \otimes tK[t]/(t^n)$ of
the form other than $\id_L \otimes D$, where $D$ is a derivation of 
$tK[t]/(t^n)$. As explained in \cite[\S 3]{without-unit}, the full description 
of derivations of such current Lie algebras is, probably, a difficult task, but
one may try, for example, to employ derivations of the forms listed in \cite[Theorem 3]{without-unit}.

After establishing the Ado theorem for any relatively free algebra $F$ in a 
variety generated by a nilpotent Lie algebra $L$, we may proceed the same way
as in the proof of Theorem in Section \ref{sec-proof}, by induction on the dimension
of ideal of relations determining $L$. To establish Lemma \ref{lemma-comb} in
characteristic $p$, an additional care will be needed to deal with vanishing
of binomial coefficients occurring in the proof.

All this, however, will result in a quite long and involved proof -- at least 
much more long and involved than the existing proof of the full-fledged Ado 
theorem in positive characteristic -- so we will not pursue this approach.

\bigskip

A final remark: in all existing proofs of the Ado theorem in characteristic 
zero, the general case is derived from the nilpotent one. 
In this respect, \S \ref{sec-proof} is a good start. However, all such derivations employ the universal enveloping algebra in a non-trivial way. A short, 
``natural'', and characteristic-free proof of the Ado theorem is yet to be found.

\section*{Acknowledgements}

Thanks are due to Alexei Lebedev for useful comments on an earlier version of
this note.

\end{document}